\documentclass[12pt]{article}
\usepackage{amsmath,amsthm,amssymb,
mathabx,bbm}

\usepackage[all]{xy}
\usepackage[active]{srcltx}

\sloppy

\newcommand{\C}{\mathbb C}
\newcommand{\R}{\mathbb R}
\newcommand{\F}{\mathbb F}

\newcommand{\pair}[4]%
{\xymatrix@C=25pt{{#1}\ar@<0.3ex>[r]^{#3}
\ar@<-0.3ex>[r]_{#4}&{#2}}}

\DeclareMathOperator{\im}{Im}
\DeclareMathOperator{\Ker}{Ker}
\DeclareMathOperator{\alg}{trn}

\renewcommand{\le}{\leqslant}
\renewcommand{\ge}{\geqslant}

\newtheorem{theorem}{Theorem}
\newtheorem{lemma}{Lemma}
\newtheorem{rs}{Transformation}
\newtheorem{Rs}{Transformation}

\theoremstyle{definition}

\begin{document}
\title{Regularizing decompositions for matrix
pencils and a topological
classification of pairs of linear
mappings\thanks{This manuscript was
published in [Linear Algebra Appl. 450
(2014) 121--137], in which Theorem 2
was formulated inaccurately; see the
footnote to Theorem 2.}}

\author{Vyacheslav Futorny\\
University of S\~ao Paulo,
Brazil\\futorny@ime.usp.br
      \and
Tetiana Rybalkina\\
Institute of Mathematics,
Kiev, Ukraine\\
rybalkina\_t@ukr.net
   \and
Vladimir V. Sergeichuk
\\
Institute of Mathematics, Kiev,
Ukraine\\ sergeich@imath.kiev.ua}

\date{}
\maketitle

\begin{abstract}
By Kronecker's theorem, each matrix
pencil $A+\lambda B$ over a field $\F$
is strictly equivalent to its
\emph{regularizing decomposition};
i.e., a direct sum
\[
(I_r+\lambda D)\oplus (M_1+\lambda N_1)
\oplus\dots\oplus (M_t+\lambda N_t),
\]
where $D$ is an  $r\times r$
nonsingular matrix and each
$M_i+\lambda N_i$ is of the form
$I_k+\lambda J_k(0)$, $J_k(0)+\lambda
I_k$, $L_k+\lambda R_k$, or
$L_k^T+\lambda R_k^T$, in which $L_k$
and $R_k$ are obtained from $I_k$ by
deleting its last or, respectively,
first row and $J_k(0)$ is a singular
Jordan block.

We give a method for constructing a
regularizing decomposition of an
$m\times n$ matrix pencil $A+\lambda
B$, which is formulated in terms of the
linear mappings $A,B:\F^n\to\F^m$.

Two $m\times n$ pencils $A+\lambda B$
and $A'+\lambda B'$ over $\mathbb
F=\mathbb R\text{ or }\mathbb C$ are
said to be \emph{topologically
equivalent} if the pairs of linear
mappings $A,B:\F^n\to\F^m$ and
$A',B':\F^n\to\F^m$ coincide up to
homeomorphisms of the spaces $\mathbb
F^{n}$ and $\mathbb F^{m}$.  We prove
that two pencils are topologically
equivalent if and only if their
regularizing decompositions coincide up
to permutation of summands and
replacement of $D$ by a nonsingular
matrix $D'$ such that the linear
operators $D,D':\F^r\to\mathbb F^{r}$
coincide up to a homeomorphism of
$\mathbb F^{r}$.
\medskip

Keywords: Pairs of linear mappings,
Matrix pencils, Regularizing
decomposition, Topological
classification
\medskip

AMS Classification: 15A21; 37C15
\end{abstract}

\section{Introduction}


In this article
\begin{itemize}
  \item a regularizing
      decomposition of a matrix
      pencil is constructed by a
      method that is formulated in
      terms of images, preimages,
      and kernels of linear
      mappings, and

  \item the problem of topological
      classification of pairs of
      linear mappings
$A,B:\F^n\to\F^m$ over $\mathbb
F=\mathbb R\text{ or }\mathbb C$ is
reduced to the
      open problem of topological
      classification of linear
      operators, which was solved
      in special cases (in
      particular, for operators
      without eigenvalues that are
      roots
of~$1$) in \cite{bud1,Capp-conexamp,%
Capp-2th-nas-n<=6,Capp-big-n<6,%
Cap+sha,Cap+ste,h-p,h-p1,
Kuip-Robb, Robb}.
\end{itemize}

\subsection{A regularizing
decomposition of matrix pencils}

A \emph{matrix pencil} over a field
$\F$ is a parameter matrix $A+\lambda
B$, in which $A$ and $B$ are matrices
over $\F$ of the same size. Two matrix
pencils $A+\lambda B$ and $A'+\lambda
B'$ are \emph{strictly equivalent} if
there exist nonsingular matrices $S $
and $R$ over $\F$ such that
$S(A+\lambda B)R=A'+\lambda B'$. This
means that the corresponding matrix
pairs $(A,B)$ and $(A',B')$ are
\emph{equivalent}; i.e.,
\begin{equation}\label{kmr}
\text{$SA=A'R$ and $S B=B'R$ for some
nonsingular $S$ and $R$.}
\end{equation}
In what follows, we consider matrix
pairs $(A,B)$ instead of pencils
$A+\lambda B$.

Denote by $J_k(0)$ the $k\times k$
singular Jordan block with units under
the diagonal. Write
\[
L_k:=\begin{bmatrix}
1\ &\ 0\ &&\ 0\\&\ddots&\ddots&\\0&&\ 1\ &0
\end{bmatrix},\ \
R_k:=\begin{bmatrix}
0\ &\ 1\ &&0\\&\ddots&\ddots&\\0&&\ 0\ &\ 1
\end{bmatrix}\ \
\text{($(k-1)$-by-$k$);}
\]
note that $L_1=R_1=0_{01}$ is the
$0\times 1$ matrix of the linear
mapping $\F\to 0$. Kronecker's
canonical form for matrix pencils (see
\cite[Section XII]{gan}) ensures that
each matrix pair $(A,B)$ over a field
$\F$ is equivalent to a direct sum
\begin{equation}\label{1.4a}
(I_r,D)\oplus(M_1,N_1)\oplus
(M_2,N_2)\oplus\dots\oplus (M_t,N_t)
\end{equation}
in which $D$ is an $r\times r$
nonsingular matrix and each $(M_i,N_i)$
is one of the matrix pairs
\begin{equation}\label{krx}
(I_k,J_k(0)),\ (J_k(0),I_k),\
(L_k,R_k),\ (L_k^T,R_k^T),\quad k=1,2,\dots
\end{equation}
The summands $(M_1,N_1),\dots,
(M_t,N_t)$ are called the
\emph{indecomposable singular summands}
of $(A,B)$; they are determined by
$(A,B)$ uniquely, up to permutation.
The summand $(I_r,D)$ is called a {\it
regular part} of $(A,B)$; its matrix
$D$ is determined uniquely up to
similarity transformations $C^{-1}DC$
($C$ is nonsingular); $D$ can be taken
in the Jordan form if $\F=\C$ or in the
real Jordan form \cite[Section
3.4]{HJ12} if $\F=\R$.

The direct sum \eqref{1.4a} is called a
\emph{regularizing decomposition} of
$(A,B)$. Van Dooren's algorithm
\cite{doo} constructs a regularizing
decomposition for a complex matrix
pencil using only unitary
transformations, which is important for
its numerical stability. The algorithm
was extended to cycles of linear
mappings in \cite{ser_cyc} and to
matrices under congruence and
*congruence in \cite{h-s_lin}.

In Theorem \ref{ykw} we give a method
for constructing a regularizing
decomposition of a matrix pair; the
method is formulated in terms of vector
spaces and linear mappings.

\subsection{A topological
classification of pairs of linear
mappings}

Each matrix $A\in\F^{m\times n}$
defines the linear mapping (which we
denote by the same letter)
$A:\F^n\to\F^m$, $v\mapsto Av$. Let
$\F$ be $\R$ or $\C$. We say that pairs
$(A,B)$ and $(A',B')$ of $m\times n$
matrices are \emph{topologically
equivalent} if the corresponding pairs
of linear mappings
\begin{equation}\label{kyc}
\xymatrix@C=25pt{ {\mathbb F^n}
\ar@<0.3ex>[r]^{A\ } \ar@<-0.3ex>[r]_{B\ }
&{\mathbb F^m}},\qquad
\xymatrix@C=25pt{ {\mathbb F^{n'}}
\ar@<0.3ex>[r]^{A'\; } \ar@<-0.3ex>[r]_{B'\; }
&{\mathbb F^{m'}}}
\end{equation}
are \emph{topologically equivalent},
which means that there exist
homeomorphisms $\varphi:\F^n\to
\F^{n'}$ and $\psi:\F^m\to \F^{m'}$
such that the diagram
\begin{equation}\label{kjlj}
\begin{split}
\xymatrix@R=20pt@C=40pt{
{\mathbb F^n} \ar@<0.3ex>[r]^A
\ar@<-0.3ex>[r]_B
\ar[d]_{\varphi}
&{\mathbb F^m}\ar[d]^{\psi}
   \\
{\mathbb F^{n'}} \ar@<0.3ex>[r]^{A'}
\ar@<-0.3ex>[r]_{B'}&{\mathbb F^{m'}}
}\end{split}
\end{equation}
is commutative:  $\psi A=A'\varphi$ and
$\psi B=B'\varphi.$

A mapping between two topological
spaces is a \emph{homeomorphism} if it
is a continuous bijection whose inverse
is also a continuous bijection; we
consider $\F^n$ as a topological space
with topology induced by the usual
norm: $ \|z\|=\sqrt{z_1\bar
z_1+\dots+z_n \bar z_n} $. If
$\varphi:\F^n\to \F^{n'}$ is a
homeomorphism, then $n=n'$ by
\cite[Corollary 19.10]{Bred} or
\cite[Section 11]{McCl}. This gives
$n=n'$ and $m=m'$ in \eqref{kjlj}.

The pairs of linear mappings
\eqref{kyc} are called \emph{linearly
equivalent} if there exist linear
bijections $\varphi$ and $\psi$ such
that the diagram \eqref{kjlj} is
commutative.  This means that the
corresponding matrix pairs $(A,B)$ and
$(A',B')$ are equivalent; i.e.,
\eqref{kmr} holds. Since each linear
bijection $\F^n\to \F^n$ is a
homeomorphism, if two pairs \eqref{kyc}
are linearly equivalent, then they are
topologically equivalent.

We say that two $r\times r$ matrices
$D$ and $D'$ over $\F$ are
\emph{topologically similar} if there
exists a homeomorphism
$\varphi:\F^r\to\F^r$ such that the
diagram
\[
\xymatrix@R=20pt@C=40pt{
{\mathbb F^r} \ar[r]^D
\ar[d]_{\varphi}
&{\mathbb F^r}\ar[d]^{\varphi}
   \\
{\mathbb F^{r}} \ar[r]^{D'} &{\mathbb
F^{r}} }\] is commutative: $\varphi
D=D'\varphi$.

The following theorem is proved in
Section \ref{jdt} by the method for
constructing a regularizing
decomposition described in Section
\ref{reg}.

\begin{theorem}\label{yyw}
Each pair $(A,B)$ of matrices of the
same size over $\mathbb F=\mathbb R$ or
$\mathbb C$ is topologically equivalent
to a direct sum \eqref{1.4a}, in which
$D$ is an $r\times r$ nonsingular
matrix and each $(M_i,N_i)$ is of the
form \eqref{krx}. The matrix $D$ is
determined by $(A,B)$ uniquely up to
topological similarity; the summands
$(M_1,N_1),\dots, (M_t,N_t)$ are
determined uniquely up to permutation.
\end{theorem}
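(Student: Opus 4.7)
The existence half of the theorem is essentially free: Kronecker's theorem produces a linear equivalence of $(A,B)$ with a pair of the form \eqref{1.4a}, and as noted earlier in the introduction any linear equivalence is automatically a topological equivalence.

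\textbf{Uniqueness --- normalization.} For the substantive direction, suppose that two canonical pairs \eqref{1.4a} are topologically equivalent through homeomorphisms $(\varphi,\psi)$ satisfying $\psi A=A'\varphi$ and $\psi B=B'\varphi$. I first arrange $\varphi(0)=0$ and $\psi(0)=0$: writing $x_0:=\varphi(0)$ and $y_0:=\psi(0)$, the intertwining relations force $A'x_0=B'x_0=y_0$, so by linearity of $A'$ and $B'$ the translates $v\mapsto\varphi(v)-x_0$ and $w\mapsto\psi(w)-y_0$ still form a topological equivalence and fix the origins.

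\textbf{Topological invariance of natural subspaces.} With origins fixed, direct computation gives
\[
\varphi(\Ker A)=\Ker A',\qquad \psi(\im A)=\im A',\qquad \varphi(A^{-1}(W))=(A')^{-1}(\psi(W))
\]
for every $W\subseteq\F^m$, together with the analogous identities for $B$. Iterating, every subspace of $\F^n$ or $\F^m$ built from $A,B$ by a finite sequence of kernels, images, preimages, intersections, and sums is transported bijectively by $\varphi$ or $\psi$ onto the corresponding subspace of $(A',B')$. Invariance of dimension under homeomorphism (cited in the excerpt via \cite{Bred}) then forces equality of all such dimensions.

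\textbf{Extracting the decomposition and the main obstacle.} The method of Theorem \ref{ykw} builds the regularizing decomposition \eqref{1.4a} from exactly this kind of natural subspace. Reading the construction, the multiplicities of the four singular families $(I_k,J_k(0))$, $(J_k(0),I_k)$, $(L_k,R_k)$, $(L_k^T,R_k^T)$ appear as dimensions of canonical subspaces and are therefore topological invariants; this pins down the singular part up to permutation. The same construction yields canonical subspaces $V\subseteq\F^n$ and $W\subseteq\F^m$ of common dimension $r$ on which $(A,B)$ realizes the regular summand $(I_r,D)$; since $V,W$ are defined by the preserved operations above, $\varphi$ maps $V$ onto the corresponding $V'$ and $\psi$ maps $W$ onto $W'$. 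On these subspaces $A$ and $A'$ are identity maps, so after the canonical identification of $V$ with $W$ via $I_r$ the two restricted homeomorphisms collapse to a single $\tilde\varphi\colon V\to V'$, and the relation $\psi B=B'\varphi$ becomes $\tilde\varphi D=D'\tilde\varphi$, that is, topological similarity of $D$ and $D'$. The main technical obstacle is verifying that Theorem \ref{ykw} really supplies \emph{intrinsic} subspaces $V,W$ for the regular summand (not merely an isomorphism-class statement), and that all of its numerical invariants can be written purely in terms of the $\varphi,\psi$-compatible operations kernel/image/preimage/sum/intersection; once this is secured, the rest of the argument is essentially formal.
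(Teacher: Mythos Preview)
Your overall strategy matches the paper's: run the algorithm of Theorem~\ref{ykw}, use invariance of dimension to pin down the singular summands, and then extract topological similarity of $D$ and $D'$ from what remains.

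The gap is in your handling of the regular part. You assert that Theorem~\ref{ykw} supplies canonical \emph{subspaces} $V\subseteq\F^n$, $W\subseteq\F^m$ carrying $(I_r,D)$, and you flag verifying this as ``the main technical obstacle.'' But no such subspaces exist in general: Transformations~1 and~2 do restrict to subspaces, but Transformation~3 passes to \emph{quotients} $U\mapsto U/\Ker\mathcal B$, $V\mapsto V/\mathcal A(\Ker\mathcal B)$, so the regular part is a subquotient, and the Kronecker splitting does not single out a canonical complementary subspace inside $\F^n$ or $\F^m$. The paper's resolution (its Lemma~\ref{kld}) is not to find such subspaces but to show that $\varphi,\psi$ \emph{descend} to these quotients: from $\psi\mathcal B=\mathcal B'\varphi$ one gets $\varphi(u+\Ker\mathcal B)\subseteq\varphi(u)+\Ker\mathcal B'$ for every $u$, and a similar coset-to-coset statement for $\psi$ on $V/\mathcal A(\Ker\mathcal B)$. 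Iterating gives a homeomorphism between the regular subquotients, which conjugates $D$ to $D'$. This descent lemma is the actual missing step, and it replaces your unattainable ``intrinsic subspaces'' hypothesis.

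A smaller point: your claim that $\varphi$ carries sums $X+Y$ of canonical subspaces to $X'+Y'$ is unjustified, since a nonlinear homeomorphism need not send $x+y$ into $\varphi(X)+\varphi(Y)$. Fortunately the construction never actually uses sums---only images, preimages, kernels, intersections, and quotients---so you should simply drop ``sums'' from your list rather than rely on it.
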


Kuiper and Robbin~\cite{Kuip-Robb,
Robb} gave a criterion for topological
similarity of real matrices without
eigenvalues that are roots of~$1$;
their result was extended to complex
matrices in \cite{bud1}. The problem of
topological similarity of matrices with
an eigenvalue that is a root of $1$ was
considered by Kuiper and
Robbin~\cite{Kuip-Robb, Robb}, Cappell
and Shaneson~\cite{Capp-conexamp,
Capp-2th-nas-n<=6,Capp-big-n<6,
Cap+sha,Cap+ste}, and Hambleton and
Pedersen \cite{h-p,h-p1}. The problem
of topological classification was
studied for orthogonal operators
\cite{Pardon}, for affine operators
\cite{Blanc,bud,bud1,Ephr}, for
M\"obius transformations
\cite{ryb+ser_meb}, for chains of
linear mappings \cite{ryb+ser}, for
oriented cycles of linear mappings
\cite{ryb_new,ryb+ser1}, and for quiver
representations \cite{lop}.


\section{A method for constructing a regularizing
decomposition}\label{reg}

\subsection{A formal description of the
method}

Let ${\mathcal P}:\pair{U}{V}{\mathcal
A}{\mathcal B}$ be a pair of linear
mappings over a field $\mathbb F$. Let
\begin{equation}\label{loz}
{\mathcal P}={\mathcal P}_{11},\ {\mathcal P}_{12},\ \dots,\
{\mathcal P}_{1\ell_1}={\mathcal P}_{21},\
{\mathcal P}_{22},\ \dots,\
{\mathcal P}_{2\ell_2}={\mathcal P}_{31},\
{\mathcal P}_{32},\ \dots,\
{\mathcal P}_{3\ell_3}
\end{equation}
be a sequence of pairs  of linear
mappings on vector spaces over $\mathbb
F$, in which
\begin{itemize}
  \item ${\mathcal P}_{i,j+1}$ is
      defined by ${\mathcal
      P}_{ij}:\pair{U_{ij}}{V_{ij}}{{\mathcal
      A}_{ij}}{{\mathcal B}_{ij}}$
      as follows:
\begin{align*}
&\xymatrix@R=20pt@C=20pt{
{{\mathcal P}_{1,j+1}}:&
{U_{1,j+1}:=\mathcal A_{1j}^{-1}
(\im \mathcal B_{1j})}
\ar@<0.3ex>[rr]^{\quad\
\mathcal A_{1,j+1}}
\ar@<-0.3ex>[rr]_{\quad\
\mathcal B_{1,j+1}}
&&{\im \mathcal B_{1j}=:V_{1,j+1}}
}\\
&\xymatrix@R=20pt@C=20pt{
{{\mathcal P}_{2,j+1}}:&
{U_{2,j+1}:=\mathcal B_{2j}^{-1}
(\im \mathcal A_{2j})}
\ar@<0.3ex>[rr]^{\quad\
\mathcal A_{2,j+1}}
\ar@<-0.3ex>[rr]_{\quad\
\mathcal B_{2,j+1}}
&&{\im \mathcal A_{2j}=:V_{2,j+1}}
}
       \\
&\xymatrix@R=20pt@C=20pt{
{{\mathcal P}_{3,j+1}}:&{U_{3,j+1}:=
U_{3j}/\Ker\mathcal B_{3j}}
\ar@<0.3ex>[rr]^{\mathcal A_{3,j+1}\quad}
\ar@<-0.3ex>[rr]_{\mathcal B_{3,j+1}\quad}
&&{V_{3j}/\mathcal A_{3j}
(\Ker\mathcal B_{3j})=:V_{3,j+1}}
}
\end{align*}
(the mappings $\mathcal A_{i,j+1}$
and $\mathcal B_{i,j+1}$ are
induced by $\mathcal A_{ij}$ and
$\mathcal B_{ij}$),

\item ${\mathcal
    P}_{i\ell_i}=(\mathcal
    A_{i\ell_i},\mathcal
    B_{i\ell_i})$ is the first pair
    in the sequence ${\mathcal
    P}_{i1},\ {\mathcal P}_{i2},\
    \dots$ such that
\begin{itemize}
  \item $\mathcal B_{i\ell_i}$ is
      surjective if $i=1$,
  \item $\mathcal A_{i\ell_i}$ is
      surjective if $i=2$,
  \item  $\mathcal B_{i\ell_i}$ is
      bijective if $i=3$.
\end{itemize}
\end{itemize}

\begin{theorem}\label{ykw}
Let ${\mathcal P}:\pair{U}{V}{\mathcal
A}{\mathcal B}$ be a pair of linear
mappings over a field $\mathbb F$, and
let \eqref{loz} be a sequence of pairs
of linear mappings on vector spaces
over $\mathbb F$ that has been
constructed according to the preceding
formal description. Then there exist
bases of the spaces $U$ and $V$ in
which $(\mathcal A,\mathcal B)$ is
given by the direct sum of the
following matrix pairs:
\begin{itemize}
 \item a pair of nonsingular
     matrices that gives ${\mathcal
    P}_{3\ell_3}=(\mathcal
    A_{3\ell_3},\mathcal
    B_{3\ell_3})$,

  \item $(\dim V_{1j}-\dim
      V_{1,j+1})-(\dim U_{1j} -\dim
      U_{1,j+1})$ copies of
      $(L_j^T,R_j^T)$, $1\le
      j<\ell_1$,

  \item $(\dim U_{1j}-\dim
      U_{1,j+1})-(\dim V_{1,j+1}
      -\dim V_{1,j+2})$ copies of
      $(I_j,J_j(0))$, $1\le
      j<\ell_1$,

  \item $\dim V_{2j}-2\dim
      V_{2,j+1}+\dim V_{2,j+2}$
      copies of $(J_j(0),I_j)$,
      $1\le j<\ell_2$,

  \item   $\dim U_{3j}-2\dim
      U_{3,j+1}+\dim
      U_{3,j+2}$\footnote{This sum
      was written in [Linear
      Algebra Appl. 450 (2014)
      121--137] incorrectly as
      follows: $\dim V_{3j}-2\dim
      V_{3,j+1}+\dim V_{3,j+2}$ .}
      copies of $(L_j,R_j)$, $1\le
      j<\ell_3$.
\end{itemize}
This direct sum is a regularizing
decomposition for each matrix pair that
gives ${\mathcal P}$ in some bases of
$U$ and $V$.
\end{theorem}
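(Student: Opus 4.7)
The plan is to reduce the theorem to a case-by-case verification on the summands of Kronecker's canonical form \eqref{1.4a}.

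First I would check that each of the three sequence operations commutes with direct sums: if $\mathcal P=\mathcal P'\oplus\mathcal P''$ then $\mathcal P_{i,j+1}=\mathcal P'_{i,j+1}\oplus\mathcal P''_{i,j+1}$, since preimage, image, kernel, and quotient all distribute over direct sums. Once a summand has reached its own termination condition (e.g.\ $\mathcal B$ already surjective), applying the operation once more returns the same pair, so the direct-sum index $\ell_i$ is just the maximum over summands and the extra steps contribute nothing to the dimensions that appear in the formulas. Consequently each of the four multiplicity formulas in the statement is additive over direct summands of $(\mathcal A,\mathcal B)$, and by Kronecker's theorem it suffices to verify the counts on each of the building blocks in \eqref{krx} and on the regular part $(I_r,D)$.

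Next I would analyse the five building blocks individually, running them through the three sequences in order. For $(I_r,D)$ with $D$ nonsingular, all three termination conditions hold at step $1$, so the pair passes through unchanged and reappears as $\mathcal P_{3,\ell_3}$, contributing to no singular count. For $(L_k^T,R_k^T)$ and $(I_k,J_k(0))$, $\mathcal B$ is not surjective; one sequence 1 step reduces each pair to the analogous pair of size $k-1$, and iterating until the spaces collapse to zero gives $\ell_1=k+1$ together with explicit dimension tables, substitution of which into the two sequence 1 formulas yields exactly one copy of the respective summand at $j=k$ and nothing elsewhere. After these reductions the pair is trivial and is therefore invisible to sequences 2 and 3. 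Analogous iterated reductions handle $(J_k(0),I_k)$ under sequence 2 and $(L_k,R_k)$ under sequence 3, using that surjectivity of $\mathcal B$ (resp.\ surjectivity of $\mathcal A$) is preserved when passing from sequence 1 to sequence 2 (resp.\ from sequence 2 to sequence 3), so the sequences truly act in order on the surviving part.

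Combining these contributions via additivity recovers precisely the direct sum stated in the theorem, and by Kronecker's theorem this sum is a regularizing decomposition for any matrix pair representing $\mathcal P$ in some pair of bases. The main obstacle is the bookkeeping at the boundary: for each of the four indecomposable shapes one must pin down the termination index exactly and verify at the step where the spaces collapse to zero that the single correct count formula produces $1$ in the right position and $0$ elsewhere, while the other three formulas vanish identically. The computation is elementary but demands careful indexing, especially since the dimensions $U_{i,j}$ and $V_{i,j}$ at the last one or two steps behave differently from those in the middle of each chain.
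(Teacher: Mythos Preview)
Your approach is correct and essentially the same as the paper's: both invoke Kronecker's theorem, observe that the three transformations distribute over direct sums, and then track how each transformation shortens the relevant indecomposable singular summand by one step while leaving the other summands and the regular part fixed. The paper packages this into three lemmas (one per transformation) and uses chain diagrams of basis vectors to make the dimension bookkeeping transparent, but the computational content matches your block-by-block verification.
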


\subsection{A justification of the
method}

In this section we prove Theorem
\ref{ykw}. All matrices and vector
spaces are considered over a field
$\mathbb F$.

The \emph{direct sum} of pairs of
linear mappings ${\mathcal
P}:\pair{U}{V}{\mathcal A}{\mathcal B}$
and ${\mathcal
P'}:\pair{U'}{V'}{\mathcal A'}{\mathcal
B'}$ is the pair
\[
{\mathcal
P}\oplus{\mathcal
P'}:\ \ {\xymatrix@C=45pt{{U\oplus U'}
\ar@<0.3ex>[r]^{\mathcal A\oplus\mathcal A'}
\ar@<-0.3ex>[r]_{\mathcal
B\oplus\mathcal B'}&{V\oplus V'}}}
\]
It follows from Kronecker's canonical
form for matrix pencils \cite[Section
XII]{gan} (or from the Krull--Schmidt
theorem for quiver representations
\cite{ser_enc}) that each pair of
linear mappings ${\mathcal
P}:\pair{U}{V}{\mathcal A\:}{\mathcal
B\:}$ decomposes into a direct sum of
direct-sum-indecomposable pairs and
this sum is uniquely determined, up to
permutation and linear equivalence of
direct summands. Replacing all summands
that are pairs of bijections by their
direct sum ${\mathcal R}$, we obtain a
decomposition
\begin{equation}\label{qug}
\mathcal P=\mathcal R\oplus\mathcal S_1
\oplus\dots\oplus
\mathcal S_t
\end{equation}
in which $\mathcal R$ is a pair of
linear bijections and each $\mathcal
S_i$ is given in some bases of its
spaces by one of the matrix pairs
\eqref{krx}. Each row and each column
of the matrices in matrix pairs
\eqref{krx} contains at most one ``1''
and the other entries are ``0''; hence
the basis vectors of $\mathcal S_i$
form a chain
\begin{equation}\label{lkt}
{\mathcal C}_i:\ \bullet
\text{\,---}\bullet
\text{---}\cdots\text{---}\bullet
\text{---}\,\bullet
\end{equation}
in which the points denote these basis
vectors and each line is
$\xrightarrow{\mathcal A}$,
$\xrightarrow{\mathcal B}$,
$\xleftarrow{\mathcal A}$, or
$\xleftarrow{\mathcal B}$. The number
of lines is called the \emph{length} of
${\mathcal C}_i$. For each chain
$\mathcal C$ we denote by $P(\mathcal
C)$ the matrix pair that is determined
by $\cal C$. For example,
\begin{equation}\begin{split}\label{llv}
(I_3,J_3(0))&=
P(\stackrel{\mathcal
      B}{\nrightarrow}
      v_1\xleftarrow{\mathcal A}u_1
\xrightarrow{\mathcal B}
v_2\xleftarrow{\mathcal A}u_2
\xrightarrow{\mathcal B} v_3
\xleftarrow{\mathcal A}u_3
\xrightarrow{\mathcal B}0)
        \\
(L_3^T,R_3^T)&=
P(\stackrel{\mathcal B}{\nrightarrow}v_1
\xleftarrow{\mathcal A}u_1
\xrightarrow{\mathcal B}
v_2\xleftarrow{\mathcal A}u_2
\xrightarrow{\mathcal B} v_3
\stackrel{\mathcal
      A}{\nleftarrow})
\end{split}\end{equation}
in which $u_1,u_2,\dots$ are basis
vectors of $U$, $v_1,v_2,\dots$ are
basis vectors of $V$, and
$\stackrel{\mathcal B}{\nrightarrow}
v_1$ means that $v_1\notin \im \mathcal
B$.

Now we apply to the pair ${\mathcal
P}_1:={\mathcal P}$ transformations of
three types that transform the regular
part $\mathcal R$ to a linearly
equivalent pair and decrease the
lengths of chains \eqref{lkt}, which
correspond to the singular summands. We
describe how the singular summands are
changed. We repeat these
transformations until all singular
summands are eliminated.

\begin{rs}\label{lqw}
Replace a pair of linear mappings $
{\mathcal P}_1:
\pair{U_1}{V_1}{{\mathcal
A}_1}{{\mathcal B}_1} $ with the pair
\begin{equation}\label{lksp}
\xymatrix@R=20pt@C=40pt{
{\mathcal P_2:
\hspace{-1cm}}
&{U_2:=
\mathcal A_1^{-1}(\im \mathcal B_1)}
\ar@<0.3ex>[r]^{\quad\ \mathcal A_2}
\ar@<-0.3ex>[r]_{\quad\ \mathcal B_2}
&{\im \mathcal B_1=:V_2}}
\end{equation}
in which $\mathcal A_2$ and $\mathcal
B_2$ are the restrictions of $\mathcal
A_1$ and $\mathcal B_1$. We write
$\mathcal P_2=\alg_1({\mathcal P_1})$.
\end{rs}
The mappings ${\mathcal A}_2$ and
${\mathcal B}_2$ in \eqref{lksp} are
defined correctly since ${\mathcal
A}_1(U_2)\subset V_2$ and ${\mathcal
B}_1(U_2)\subset V_2$.

\begin{lemma}\label{knp}
Let $\mathcal P_1$ and $\mathcal P_2$
be the pairs of linear mappings from
Transformation 1. Let $(A_1,B_1)$ and
$({A}_2,{B}_2)$ be their matrix pairs
in arbitrary bases. We can construct a
regularizing decomposition of
$({A}_2,{B}_2)$ from a regularizing
decomposition of $(A_1,B_1)$ as
follows:
\begin{itemize}
  \item Delete all summands of the
      form
\begin{align}
\label{111}
&
      P(\stackrel{\mathcal
      B_1}{\nrightarrow}
      v_1\stackrel{\mathcal
A_1}{\nleftarrow})
      =(0_{10},0_{10})=(L_1^T,R_1^T)
                       \\
    \label{222}
&
      P(\stackrel{\mathcal
B_1}{\nrightarrow}
      v_1\xleftarrow{\mathcal
A_1}u_1 \xrightarrow{\mathcal
B_1}0)= ([1],[0])=(I_1,J_1(0));
\end{align}
their numbers are, respectively,
\begin{gather}\label{555}
(\dim V_1-\dim V_2)-(\dim U_1 -\dim
      U_2)
\\\label{666}
(\dim U_1-\dim U_2)-(\dim V_2 -\dim
      V_3),
\end{gather}
in which $V_3$ is the second vector
space of the pair
\begin{equation*}\label{lksy}
\xymatrix@R=20pt@C=40pt{
{\mathcal P_3:=\alg_1({\mathcal P_2}):
\hspace{-1cm}}
&{U_3:=
\mathcal A_2^{-1}(\im \mathcal B_2)}
\ar@<0.3ex>[r]^{\quad\ \mathcal A_3}
\ar@<-0.3ex>[r]_{\quad\ \mathcal B_3}
&{\im \mathcal B_2=:V_3}}.
\end{equation*}

  \item Replace all summands of the
      form
\begin{align*}
&P(\stackrel{\mathcal
B_1}{\nrightarrow}
      v_1\xleftarrow{\mathcal
A_1}u_1
      \xrightarrow{\mathcal
B_1}v_2\xleftarrow{\mathcal A_1}
      \cdots\xrightarrow{\mathcal
B_1}v_k
      \stackrel{\mathcal
A_1}{\nleftarrow})
      =(L_k^T,R_k^T)
      \\
&
      P(\stackrel{\mathcal
B_1}{\nrightarrow}
      v_1\xleftarrow{\mathcal
A_1}u_1
      \xrightarrow{\mathcal
B_1}v_2\xleftarrow{\mathcal A_1}
      \cdots\xrightarrow{\mathcal
B_1}v_k\xleftarrow{\mathcal A_1}
      u_k\xrightarrow{\mathcal
B_1}0)=(I_k,J_k(0))
\end{align*}
with $k\ge 2$ by
\begin{align*}
&
      P(\stackrel{\mathcal
B_2}{\nrightarrow}
      v_2\xleftarrow{\mathcal
A_2}u_2
      \xrightarrow{\mathcal
B_2}v_3\xleftarrow{\mathcal
A_2}
      \cdots\xrightarrow{\mathcal
B_2}
      v_k\stackrel{\mathcal
A_2}{\nleftarrow})
      =(L_{k-1}^T,R_{k-1}^T)
                       \\
&
      P(\stackrel{\mathcal
B_2}{\nrightarrow}
      v_2\xleftarrow{\mathcal
A_2}u_2
      \xrightarrow{\mathcal
B_2}v_3\xleftarrow{\mathcal
A_2}
      \cdots\xrightarrow{\mathcal
B_2}v_k
      \xleftarrow{\mathcal
A_2}
      u_k\xrightarrow{\mathcal
B_2}0)
      =(I_{k-1},J_{k-1}(0)),
\end{align*}
respectively.

  \item Leave the other summands
      unchanged.

\end{itemize}
\end{lemma}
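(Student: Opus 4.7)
The plan is to exploit the canonical decomposition \eqref{qug} of $\mathcal{P}_1$ together with the fact that Transformation~1 respects direct sums. First I would verify the identity $\alg_1(\mathcal{P}\oplus\mathcal{P}')=\alg_1(\mathcal{P})\oplus\alg_1(\mathcal{P}')$, which follows at once from $\im(\mathcal{B}\oplus\mathcal{B}')=\im\mathcal{B}\oplus\im\mathcal{B}'$ and $(\mathcal{A}\oplus\mathcal{A}')^{-1}(W\oplus W')=\mathcal{A}^{-1}(W)\oplus(\mathcal{A}')^{-1}(W')$. Consequently it suffices to compute $\alg_1$ separately on the regular part $\mathcal R$ and on each of the four indecomposable families listed in \eqref{krx}.

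I would then run a case analysis using the chain picture \eqref{lkt}--\eqref{llv}. For the regular part, $\mathcal{B}_1$ is a bijection, so $\im\mathcal{B}_1=V$ and $\mathcal{A}_1^{-1}(V)=U$; thus $\mathcal R$ is preserved. For the chains underlying $(J_k(0),I_k)$ and $(L_k,R_k)$, every basis vector $v_i$ of the summand already lies in $\im\mathcal{B}_1$ (because $\mathcal{B}_1$ restricted to the summand is either the identity or a shift that hits every $v_i$) and every $u_i$ maps under $\mathcal{A}_1$ into that image, so these summands are preserved. For $(L_k^T,R_k^T)$ the chain starts at a vertex $v_1\notin\im\mathcal{B}_1$; since $\mathcal{A}_1u_1=v_1$, the vertex $u_1$ is excluded from $\mathcal{A}_1^{-1}(\im\mathcal{B}_1)$. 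Restricting therefore amputates the two leftmost vertices of the chain, yielding $(L_{k-1}^T,R_{k-1}^T)$ when $k\ge 2$ and deleting the summand when $k=1$. An entirely analogous inspection of $(I_k,J_k(0))$, whose chain also begins $\stackrel{\mathcal{B}_1}{\nrightarrow}v_1\xleftarrow{\mathcal{A}_1}u_1$, gives $(I_{k-1},J_{k-1}(0))$ for $k\ge 2$ and deletion for $k=1$.

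For the multiplicities \eqref{555} and \eqref{666} I would let $a_k,b_k,c_k,d_k$ denote the numbers of copies of $(L_k^T,R_k^T),(I_k,J_k(0)),(J_k(0),I_k),(L_k,R_k)$ in the regularizing decomposition of $\mathcal{P}_1$. Summing the dimension losses recorded in the preceding paragraph over all summands gives
\[
\dim V_1-\dim V_2=\sum_{k\ge 1}(a_k+b_k),\qquad
\dim U_1-\dim U_2=\sum_{k\ge 2}a_k+\sum_{k\ge 1}b_k,
\]
so $(\dim V_1-\dim V_2)-(\dim U_1-\dim U_2)=a_1$, which is \eqref{555}. Applying the same bookkeeping to $\mathcal{P}_2=\alg_1(\mathcal{P}_1)$, whose multiplicities satisfy $a'_k=a_{k+1}$ and $b'_k=b_{k+1}$ (with $c_k,d_k$ unchanged), yields $\dim V_2-\dim V_3=\sum_{k\ge 2}(a_k+b_k)$; subtracting this from $\dim U_1-\dim U_2$ isolates $b_1$, which is \eqref{666}.

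I expect the main obstacle to be conceptual rather than computational: one must check that the chain conventions of \eqref{lkt}--\eqref{llv} are read consistently across all four families, and that it is precisely the two endpoints on the $v_1$-side of each chain that are removed by $\alg_1$. Once that bookkeeping is fixed, every step in the argument reduces either to inspection of a chain or to a one-line dimension count, and the fact that the result is again a direct sum of the form \eqref{1.4a} follows because each indecomposable summand is individually sent to either another indecomposable of the same family or to the empty pair.
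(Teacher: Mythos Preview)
Your proposal is correct and follows essentially the same strategy as the paper: reduce to indecomposable summands via the compatibility of $\alg_1$ with direct sums, inspect each of the four chain types to see which vertices survive the restriction to $\mathcal A_1^{-1}(\im\mathcal B_1)\rightrightarrows\im\mathcal B_1$, and then recover the multiplicities \eqref{555}--\eqref{666} by dimension bookkeeping. The only cosmetic difference is that the paper first passes to the equivalent pairs $(J_k(0)^T,I_k)$ and $(R_k,L_k)$ so that every chain is uniformly of type \eqref{sel} or \eqref{kfy}, and then counts chains of type \eqref{sel} versus chains of nonzero length, whereas you keep the original four families and track the multiplicities $a_k,b_k$ directly; the arithmetic is the same either way.
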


\begin{proof}
Only in this proof, we use regularizing
decompositions with the pairs
$(J_k(0)^T,I_k)$ and $(R_k,L_k)$
instead of $(J_k(0),I_k)$ and
$(L_k,R_k)$ (they are equivalent). Each
chain \eqref{lkt} for such regularizing
decompositions has either the form
\begin{equation}\label{sel}
\stackrel{\mathcal
      B_1}{\nrightarrow}
      v_1\xleftarrow{\mathcal A_1}u_1
\xrightarrow{\mathcal B_1}
v_2\xleftarrow{\mathcal A_1}u_2
\xrightarrow{\mathcal B_1}\cdots,
\end{equation}
as in \eqref{llv}, or the form
\begin{equation}\label{kfy}
0
\xleftarrow{\mathcal A_1}
u_1 \xrightarrow{\mathcal B_1}
v_1\xleftarrow{\mathcal A_1}u_2
\xrightarrow{\mathcal B_1}\cdots,
\end{equation}
as in
\begin{align*}
(J_3(0)^T,I_3)&=
P(0
\xleftarrow{\mathcal A_1}
u_1 \xrightarrow{\mathcal B_1}
v_1\xleftarrow{\mathcal A_1}u_2
\xrightarrow{\mathcal B_1}
v_2\xleftarrow{\mathcal A_1}u_3
\xrightarrow{\mathcal B_1}v_3
\stackrel{\mathcal
      A_1}{\nleftarrow})
       \\
(R_3,L_3)&=P(0
\xleftarrow{\mathcal A_1}
u_1 \xrightarrow{\mathcal B_1}
v_1\xleftarrow{\mathcal A_1}u_2
\xrightarrow{\mathcal B_1}
v_2\xleftarrow{\mathcal A_1}u_3
\xrightarrow{\mathcal B_1}0).
\end{align*}
Applying Transformation 1 to
\eqref{qug} with $\mathcal P=\mathcal
P_1$, we get
\[
 \alg_1(\mathcal
P_1)=\alg_1(\mathcal R)\oplus
\alg_1(\mathcal S_1)
\oplus\dots\oplus
\alg_1(\mathcal S_t).
\]
It is clear that $\alg_1(\mathcal
R)=\mathcal R$.

Each $\mathcal S_i$ is given by a chain
$\mathcal C_i$ of the form \eqref{sel}
or \eqref{kfy}. Respectively,
$\alg_1(\mathcal S_i)$ is given by the
part of $\mathcal
C_i$ taken in a rectangle in \\[-5mm]
\begin{equation}\label{ewv}
\begin{split}
\text{\hbox{\raisebox{-2.6cm}
{$\mathcal C_i:$}}} \quad
\xymatrix@R=0pt@C=50pt{
  {\save
  .[ddddddddddr]!C*[F-:<3pt>]\frm{}
  \restore}
{\quad\ }&{\begin{matrix}{}\\\vdots
\end{matrix}}\\{\vdots}&\\&v_4\\\\
u_3\ar@{-->}[ruu]^{\mathcal B_1}
\ar[rdd]^{\mathcal
A_1}&\\\\
&v_3\\\\
u_2\ar@{-->}[ruu]^{\mathcal B_1}
\ar[rdd]^{\mathcal
A_1}&\\\\
&v_2\\\\
u_1\ar@{-->}[ruu]^{\mathcal B_1}
\ar[rdd]^{\mathcal
A_1}&\\\\
& v_1 }\quad\text{\hbox{\raisebox{-2.6cm}
{or}}}\quad
\xymatrix@R=0pt@C=50pt{
  {\save
  .[ddddddddddddr]!C*[F-:<3pt>]\frm{}
  \restore}
{\quad\
}&{\begin{matrix}{}\\\vdots\quad
\end{matrix}}\\{\vdots}&\\&v_3\quad\\\\
u_3\ar@{-->}[ruu]^{\mathcal
B_1}\ar[rdd]^{\mathcal
A_1}&\\\\
&v_2\quad\\\\
u_2\ar@{-->}[ruu]^{\mathcal
B_1}\ar[rdd]^{\mathcal
A_1}&\\\\
&v_1\quad\\\\
u_1\ar@{-->}[ruu]^{\mathcal B_1}&
{\begin{matrix}
\end{matrix}}}
\end{split}\end{equation}
since
\begin{itemize}
  \item in the pair of linear
      mappings given by the
      left-hand chain, the space
      $\im \mathcal B_1$ is
      generated by $v_2,v_3,\dots$
      and the space $\mathcal
      A_1^{-1}(\im \mathcal B_1)$
      is generated by
      $u_2,u_3,\dots$;

  \item in the pair of linear
      mappings given by the
      right-hand chain, the spaces
      $\im \mathcal B_1$ and
      $\mathcal A_1^{-1}(\im
      \mathcal B_1)$ are generated
      by $v_1,v_2,\dots$ and
      $u_1,u_2,\dots$ ($u_1\in
      \mathcal A_1^{-1}(\im
      \mathcal B_1)$ because
      $u_1\in\mathcal
      A_1^{-1}(0)$).
\end{itemize}

This proves all but two of the
statements in Lemma \ref{knp}; it
remains to prove the correctness of
\eqref{555} and \eqref{666}.

It follows from \eqref{ewv} that under
the action of Transformation 1 each
chain of the form \eqref{sel} loses its
basis vector $v_1$, each chain of the
form \eqref{sel} of nonzero length
loses its basis vector $u_1$, and each
chain of the form \eqref{kfy} does not
change. Hence, the number of chains of
the form \eqref{sel} is equal to $\dim
V_1-\dim V_2$. The number of chains of
the form \eqref{sel} of nonzero length
is equal to $\dim U_1-\dim U_2$. Thus,
the number of chains of the form
\begin{equation}\label{n1}
\stackrel{\mathcal
      B_1}{\nrightarrow}
      v_1\stackrel{\mathcal
A_1}{\nleftarrow}
\end{equation}
is equal to \eqref{555}.

The chains of the form \eqref{n1} and
\begin{equation}\label{n2}
\stackrel{\mathcal B_1}{\nrightarrow}
      v_1\xleftarrow{\mathcal
A_1}u_1 \xrightarrow{\mathcal B_1}0
\end{equation}
(whose length is 0 and 1) disappear
under the action of Transformation 1.
The other chains of the form
\eqref{sel} become the chains of the
form \eqref{sel} of the pair $\mathcal
P_2$ (but their length is reduced by
2); and so their number is $\dim
V_2-\dim V_3$. Thus, the number of
``short'' chains \eqref{n1} and
\eqref{n2} is
\begin{equation}\label{kre}
(\dim V_1-\dim
      V_2)-(\dim V_2 -\dim V_3).
\end{equation}
Since the number of
chains \eqref{n1} in $\mathcal P_1$ is
\eqref{555}, the number of chains
\eqref{n2} is \eqref{666}.
\end{proof}

We repeat Transformation 1 until we
obtain a pair $\mathcal P_{\ell}:
\pair{U_{\ell}}{V_{\ell}}{\mathcal
A_{\ell}}{\mathcal B_{\ell}} $, in
which ${\mathcal B}_{\ell}$ is
surjective. We denote by $ {\mathcal
P_1}: \pair{U_1}{V_1}{\mathcal
A_1}{\mathcal B_1}$ the pair obtained
and apply to it the following
transformation (in fact, we apply
Transformation 1 to
$\pair{U_1}{V_1}{\mathcal B_1}{\mathcal
A_1}$).

\begin{rs}\label{lqph}
If ${\mathcal P_1}:
\pair{U_1}{V_1}{\mathcal A_1}{\mathcal
B_1}$ is a  pair of linear mappings in
which ${\mathcal B_1}$ is surjective,
then replace it with the pair
\begin{equation}\label{lfr}
\xymatrix@R=20pt@C=40pt{
{\mathcal P_2=\alg_2({\mathcal P_1}):
\hspace{-1cm}}
&{U_2:=
\mathcal B_1^{-1}(\im \mathcal A_1)}
\ar@<0.3ex>[r]^{\quad\ \mathcal A_2}
\ar@<-0.3ex>[r]_{\quad\ \mathcal B_2}
&{\im \mathcal A_1=:V_2}}
\end{equation}
in which $\mathcal A_2$ and $\mathcal
B_2$ are the restrictions of $\mathcal
A_1$ and $\mathcal B_1$.
\end{rs}

\begin{lemma}\label{knp1}
Let $\mathcal P_1$ and $\mathcal P_2$
be the pairs of linear mappings from
Transformation 2. Let $(A_1,B_1)$ and
$({A}_2,{B}_2)$ be their matrix pairs
in arbitrary bases. We can construct a
regularizing decomposition of
$({A}_2,{B}_2)$ from a regularizing
decomposition  of $(A_1,B_1)$ as
follows:
\begin{itemize}
  \item Delete all summands of the
      form
\begin{equation*}
    \label{22}
      P(\stackrel{\mathcal
A_1}{\nrightarrow}
      v_1\xleftarrow{\mathcal
B_1}u_1 \xrightarrow{\mathcal
A_1}0)= ([0],[1])=(J_1(0),I_1);
\end{equation*}
their number is
\begin{equation*}\label{66}
\dim V_1-2\dim
      V_2+\dim V_3
\end{equation*}
in which $V_3$ is the second vector
space of the pair ${\mathcal
P}_3:=\alg_2({\mathcal P}_2)$.

  \item Replace all summands of the
      form
\begin{equation*}
      P(\stackrel{\mathcal
A_1}{\nrightarrow}
      v_1\xleftarrow{\mathcal
B_1}u_1
      \xrightarrow{\mathcal
A_1}v_2\xleftarrow{\mathcal B_1}
      \cdots\xrightarrow{\mathcal
A_1}v_k\xleftarrow{\mathcal B_1}
      u_k\xrightarrow{\mathcal
A_1}0)=(J_k(0),I_k)
\end{equation*}
with $k\ge 2$ by
\[
      P(\stackrel{\mathcal
A_2}{\nrightarrow}
      v_2\xleftarrow{\mathcal
B_2}u_2
      \xrightarrow{\mathcal
A_2}v_3\xleftarrow{\mathcal
B_2}
      \cdots\xrightarrow{\mathcal
A_2}v_k
      \xleftarrow{\mathcal
B_2}
      u_k\xrightarrow{\mathcal
A_2}0)
      =(J_{k-1}(0),I_{k-1}).
\]
\end{itemize}
\end{lemma}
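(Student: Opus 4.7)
The argument is a symmetric twin of the proof of Lemma \ref{knp}. Set $\tilde{\mathcal P}_1 := \pair{U_1}{V_1}{\mathcal B_1}{\mathcal A_1}$, obtained from $\mathcal P_1$ by interchanging the two arrows. Directly from the definitions, applying Transformation \ref{lqw} to $\tilde{\mathcal P}_1$ produces the pair with underlying spaces $\tilde{\mathcal A}_1^{-1}(\im \tilde{\mathcal B}_1)=\mathcal B_1^{-1}(\im \mathcal A_1)=U_2$ and $\im \tilde{\mathcal B}_1=\im \mathcal A_1=V_2$; hence Transformation \ref{lqph} on $\mathcal P_1$ coincides, up to the same swap, with Transformation \ref{lqw} on $\tilde{\mathcal P}_1$. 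The plan is therefore to apply Lemma \ref{knp} to $\tilde{\mathcal P}_1$ and translate the conclusion back.

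Interchanging $\mathcal A$ and $\mathcal B$ commutes with direct sums and maps the four singular types in \eqref{krx} as $(I_k,J_k(0))\leftrightarrow(J_k(0),I_k)$, $(L_k,R_k)\leftrightarrow(R_k,L_k)$, and $(L_k^T,R_k^T)\leftrightarrow(R_k^T,L_k^T)$; the latter two pairs of forms are equivalent (as noted in the proof of Lemma \ref{knp}). The surjectivity hypothesis $\im \mathcal B_1=V_1$ rules out in $\mathcal P_1$ every summand that would contribute a basis vector $v_1\notin \im \mathcal B_1$, namely the types $(I_k,J_k(0))$ and $(L_k^T,R_k^T)$. Consequently $\tilde{\mathcal P}_1$ has no summands of the types $(J_k(0),I_k)$ or $(L_k^T,R_k^T)$, so in Lemma \ref{knp} applied to $\tilde{\mathcal P}_1$ the deletion and replacement associated with $(L_k^T,R_k^T)$ are vacuous and the only non-trivial modifications concern the $(I_k,J_k(0))$ summands. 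Swapping back yields exactly the statement of Lemma \ref{knp1}, the remaining summands of $\mathcal P_1$ (the regular part and the $(L_k,R_k)$ summands) being left untouched.

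It remains to reconcile the count. Lemma \ref{knp} gives the number of $(I_1,J_1(0))$ summands of $\tilde{\mathcal P}_1$ as $(\dim U_1-\dim U_2)-(\dim V_2-\dim V_3)$, using $\tilde U_i=U_i$ and $\tilde V_i=V_i$. The identity $\dim U_1-\dim U_2=\dim V_1-\dim V_2$ follows from surjectivity of $\mathcal B_1$: the restriction $\mathcal B_1|_{U_2}\colon U_2\to V_2$ is surjective by the very definition $U_2=\mathcal B_1^{-1}(V_2)$, and its kernel equals $\ker\mathcal B_1$ since $\ker\mathcal B_1\subseteq U_2$; applying rank--nullity both to $\mathcal B_1\colon U_1\to V_1$ and to this restriction gives the identity. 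Substituting converts the count to $\dim V_1-2\dim V_2+\dim V_3$, as claimed.

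The main obstacle is bookkeeping rather than mathematics: one must check that the swap respects the direct-sum structure and the canonical forms, and that the specialization of the count formula under surjectivity is carried out correctly. All structural content is already encoded in Lemma \ref{knp}, so no new combinatorial or chain-diagram analysis is required.
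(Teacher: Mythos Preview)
Your proposal is correct and follows essentially the same route as the paper: swap $\mathcal A_1$ and $\mathcal B_1$, invoke Lemma~\ref{knp}, and use surjectivity of $\mathcal B_1$ to rule out the $(L_k^T,R_k^T)$ summands in the swapped pair, leaving only the $(I_k,J_k(0))\leftrightarrow(J_k(0),I_k)$ modifications. The only cosmetic difference is in the dimension identity $\dim U_1-\dim U_2=\dim V_1-\dim V_2$: you derive it directly via rank--nullity for $\mathcal B_1$ and its restriction, whereas the paper obtains it by noting that the absence of $(L_k^T,R_k^T)$ summands forces \eqref{555} to vanish, after which the count \eqref{666} collapses to \eqref{kre}.
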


\begin{proof} This lemma follows from
Lemma \ref{knp} applied to $\mathcal
P_1: \pair{U_1}{V_1}{\mathcal
B_1}{\mathcal A_1}$ in which $\mathcal
B_1$ is surjective, and so its singular
summands cannot be given by chains of
the form
\[
\stackrel{\mathcal A_1}{\nrightarrow}
      v_1\xleftarrow{\mathcal
B_1}u_1
      \xrightarrow{\mathcal
A_1}v_2\xleftarrow{\mathcal B_1}
      \cdots\xrightarrow{\mathcal
A_1}v_k
      \stackrel{\mathcal
B_1}{\nleftarrow},\qquad k\ge 1.
\]
Hence the number of chains of the form
$\stackrel{\mathcal A_1}{\nrightarrow}
      v_1\xleftarrow{\mathcal
B_1}u_1 \xrightarrow{\mathcal A_1}0$ is
equal to \eqref{kre}.
\end{proof}

We repeat Transformation 2 until we
obtain a pair $\mathcal P_{\ell}:
\pair{U_{\ell}}{V_{\ell}}{\mathcal
A_{\ell}}{\mathcal B_{\ell}} $, in
which ${\mathcal A}_{\ell}$ is
surjective (these transformations
preserve the surjectivity of ${\mathcal
B}_{\ell}$). We denote by $ {\mathcal
P_1}: \pair{U_1}{V_1}{\mathcal
A_1}{\mathcal B_1}$ the pair obtained
and apply to it the following
transformation.

\begin{rs}\label{lke}
If $ {\mathcal P_1}:
\pair{U_1}{V_1}{\mathcal A_1}{\mathcal
B_1}$ is a  pair of linear mappings in
which ${\mathcal A_1}$ and ${\mathcal
B_1}$ are surjective, then replace it
with the pair
\begin{equation}\label{muj}
\xymatrix@R=20pt@C=40pt{
{\mathcal P_2=\alg_3({\mathcal P_1}):
\hspace{-1cm}}
&{U_2:=U_1/\Ker\mathcal B_1}
\ar@<0.3ex>[r]^{\mathcal A_2\ \ }
\ar@<-0.3ex>[r]_{\mathcal B_2\ \ }
&{V_1/\mathcal A_1(\Ker\mathcal B_1)
=:V_2}}.
\end{equation}
\end{rs}

The mappings ${\mathcal A}_2$ and
${\mathcal B}_2$ in \eqref{muj} are
defined correctly since for each $u\in
U_1$ we have $\mathcal
A_1(u+\Ker\mathcal B_1)= \mathcal
A_1(u)+\mathcal A_1(\Ker\mathcal B_1)$
and $\mathcal B_1(u+\Ker\mathcal B_1)=
\mathcal B_1(u)$.

\begin{lemma}\label{kjr}
Let $\mathcal P_1$ and $\mathcal P_2$
be the pairs of linear mappings from
Transformation 3. Let $(A_1,B_1)$ and
$({A}_2,{B}_2)$ be their matrix pairs
in arbitrary bases. We can construct a
regularizing decomposition of
$({A}_2,{B}_2)$ from a regularizing
decomposition of $(A_1,B_1)$ as
follows:
\begin{itemize}
  \item Delete all summands of the
      form
      \[P(0\xleftarrow{\mathcal
      B_1}u_1 \xrightarrow{\mathcal
      A_1}0)
      =(0_{01},0_{01})=(L_1,R_1);\]
their number is
\begin{equation}\label{jst}
\dim U_1-2\dim
      U_2+\dim U_3
\end{equation}
in which $U_3$ is the first vector
space of the pair ${\mathcal
P}_3:=\alg_3({\mathcal P}_2)$.

  \item Replace all summands of the
      form
\begin{equation*}\label{jyz}
P( 0\xleftarrow{\mathcal B_1}
u_1\xrightarrow{\mathcal A_1}v_1
\xleftarrow{\mathcal B_1}u_2
\xrightarrow{\mathcal A_1}\cdots
\xleftarrow{\mathcal B_1}u_{k}
\xrightarrow{\mathcal A_1}0)
=(L_k,R_k),\quad k\ge 2,
\end{equation*}
by
\[
P(\bar 0\xleftarrow
{\mathcal B_2}\bar u_2
\xrightarrow{\mathcal A_2}\bar v_2
\xleftarrow{\mathcal B_2}\bar u_3
\xrightarrow{\mathcal A_2}\cdots
\xleftarrow{\mathcal B_2}
\bar u_{k}\xrightarrow{\mathcal A_2}\bar 0)
=(L_{k-1},R_{k-1}),
\]
in which $\bar 0, \bar u_2, \bar
v_2,\dots$ are the elements of the
factor spaces that contain $0,
u_2,v_2,\dots$\,.
\end{itemize}
\end{lemma}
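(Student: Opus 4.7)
The plan is to mirror the proof of Lemma~\ref{knp}: start from the Krull--Schmidt-type decomposition \eqref{qug} of $\mathcal P_1$, observe that the double surjectivity hypothesis restricts the possible singular summands very severely, verify that Transformation~3 commutes with direct sums, work out its action on each remaining chain type, and finally read off the multiplicity of $(L_1,R_1)$ by a dimension count.

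First, I would write $\mathcal P_1=\mathcal R\oplus\mathcal S_1\oplus\dots\oplus\mathcal S_t$ as in \eqref{qug}. Because both $\mathcal A_1$ and $\mathcal B_1$ are surjective, every chain containing an endpoint of the form $\stackrel{\mathcal B_1}{\nrightarrow}v_1$, $\stackrel{\mathcal A_1}{\nrightarrow}v_1$, $v_k\stackrel{\mathcal A_1}{\nleftarrow}$, or $v_k\stackrel{\mathcal B_1}{\nleftarrow}$ is ruled out; this excludes every summand of types $(I_k,J_k(0))$, $(J_k(0),I_k)$, and $(L_k^T,R_k^T)$. Hence the only singular summands that can occur are chains of $(L_k,R_k)$ type, namely $0\xleftarrow{\mathcal B_1}u_1\xrightarrow{\mathcal A_1}v_1\xleftarrow{\mathcal B_1}u_2\xrightarrow{\mathcal A_1}\cdots\xrightarrow{\mathcal A_1}0$ with $k\ge 1$.

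Next I would check that $\alg_3(\mathcal P\oplus\mathcal P')=\alg_3(\mathcal P)\oplus\alg_3(\mathcal P')$, which is immediate from $\Ker(\mathcal B\oplus\mathcal B')=\Ker\mathcal B\oplus\Ker\mathcal B'$ together with the analogous identity for $\mathcal A(\Ker\mathcal B)$. Thus $\alg_3(\mathcal P_1)=\alg_3(\mathcal R)\oplus\bigoplus_i\alg_3(\mathcal S_i)$ with $\alg_3(\mathcal R)=\mathcal R$ (the $\mathcal B$-component of $\mathcal R$ is bijective, so has trivial kernel). On a single $(L_k,R_k)$ chain, $\Ker\mathcal B_1$ restricts to $\F u_1$ and $\mathcal A_1(\Ker\mathcal B_1)$ restricts to $\F v_1$ (absent when $k=1$). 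For $k=1$ the chain collapses to zero; for $k\ge 2$ the quotient reads $\bar 0\xleftarrow{\mathcal B_2}\bar u_2\xrightarrow{\mathcal A_2}\bar v_2\xleftarrow{\mathcal B_2}\dots\xrightarrow{\mathcal A_2}\bar 0$, which is $P(L_{k-1},R_{k-1})$, exactly as asserted.

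For the counting formula \eqref{jst}, let $n_k$ denote the number of $(L_k,R_k)$ summands of $\mathcal P_1$. Each contributes a one-dimensional piece to $\Ker\mathcal B_1$, while $\mathcal R$ contributes nothing, so $\sum_{k\ge 1}n_k=\dim\Ker\mathcal B_1=\dim U_1-\dim U_2$. Applying the same identity to $\mathcal P_2$ (whose $(L_k,R_k)$-multiplicity at index~$k$ equals $n_{k+1}$) gives $\sum_{k\ge 1}n_{k+1}=\dim U_2-\dim U_3$, and subtracting yields $n_1=\dim U_1-2\dim U_2+\dim U_3$. The main obstacle I expect is the first step: rigorously excluding every non-$(L_k,R_k)$ chain type from the double-surjectivity hypothesis, and checking that $\Ker\mathcal B_1$ and $\mathcal A_1(\Ker\mathcal B_1)$ genuinely split along the summands so that Transformation~3 acts chain by chain. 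Once this is in place, the remaining work is diagram-chasing and rank--nullity.
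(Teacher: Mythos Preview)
Your proposal is correct and follows essentially the same route as the paper: decompose $\mathcal P_1$ as in \eqref{qug}, use the surjectivity of $\mathcal A_1$ and $\mathcal B_1$ to force every singular chain to be of $(L_k,R_k)$ type, apply $\alg_3$ summand by summand (sending $(L_k,R_k)$ to $(L_{k-1},R_{k-1})$ and killing $(L_1,R_1)$), and recover \eqref{jst} by subtracting $\dim U_2-\dim U_3$ from $\dim U_1-\dim U_2$. The only cosmetic differences are that you spell out the direct-sum compatibility of $\alg_3$ and the rank--nullity step more explicitly than the paper does.
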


\begin{proof}
Applying Transformation 3 to
\eqref{qug} with $\mathcal P=\mathcal
P_1$, we get
\[
 \alg_3(\mathcal
P_1)=\alg_3(\mathcal R)\oplus
\alg_3(\mathcal S_1)
\oplus\dots\oplus
\alg_3(\mathcal S_t).
\]
The pairs $\alg_3(\mathcal R)$ and
$\mathcal R$ are linearly equivalent.

Since $\mathcal A_1$ and $\mathcal B_1$
in $\mathcal P_1$ are surjections, the
end vectors of each chain of basis
vectors belong to $U_1$. Hence, the
chain of each $\mathcal S_i$ has the
form
\begin{equation}\label{net}
\mathcal C_i:\qquad
0\xleftarrow{\mathcal B_1}u_1
\xrightarrow{\mathcal A_1}v_1
\xleftarrow{\mathcal B_1}u_2
\xrightarrow{\mathcal A_1}\cdots
\xleftarrow{\mathcal B_1}u_{k}
\xrightarrow{\mathcal A_1}0,\qquad k\ge 1.
\end{equation}
Since $u_1\in \Ker\mathcal B_1$ and
$v_1\in\mathcal A_1(\Ker\mathcal B_1)$,
$\alg_3(\mathcal S_i)$ is given by the
chain
\begin{equation}\label{brt}
\bar{\mathcal C_i}:\qquad \bar 0\xleftarrow
{\mathcal B_2}\bar u_2
\xrightarrow{\mathcal A_2}\bar v_2
\xleftarrow{\mathcal B_2}\bar u_3
\xrightarrow{\mathcal A_2}\cdots
\xleftarrow{\mathcal B_2}
\bar u_{k}\xrightarrow{\mathcal A_2}\bar 0.
\end{equation}
Hence, the number of chains $\mathcal
C_i$ is equal to $\dim U_1-\dim U_2$.

The chains of the form
\begin{equation}\label{n2sd}
0\xleftarrow{\mathcal
      B_1}u_1 \xrightarrow{\mathcal
      A_1}0
\end{equation}
disappear under the action of
Transformation 3. The other chains
\eqref{net} become the chains
\eqref{brt} of the pair $\mathcal P_2$
(but their length is reduced by 2);
their number is $\dim U_2-\dim U_3$.
Thus, the number of chains \eqref{n2sd}
is \eqref{jst}.
\end{proof}

We repeat Transformation 3 until we
obtain a pair $\mathcal P_{\ell}:
\pair{U_{\ell}}{V_{\ell}}{\mathcal
A_{\ell}}{\mathcal B_{\ell}} $ in which
${\mathcal B}_{\ell}$ is bijective.
Since $\dim U_{\ell}=\dim V_{\ell}$ and
${\mathcal A}_{\ell}$ is surjective, it
is bijective too. Thus, $\mathcal
P_{\ell}$ is a regular pair.

\begin{proof}[Proof of Theorem
\ref{ykw}] We take an arbitrary pair of
linear mappings $\mathcal P:
\pair{U}{V}{\mathcal A}{\mathcal B}$
and an arbitrary decomposition
\eqref{qug} in which $\mathcal R$ is a
pair of linear bijections and each
$\mathcal S_i$ is a singular
indecomposable summand. We apply
Transformations \ref{lqw}--\ref{lke} to
$\mathcal P$ and obtain a pair that is
linearly equivalent to $\mathcal R$ in
\eqref{qug}. Lemmas
\ref{knp}--\ref{kjr} determine the
summands $\mathcal S_1,\dots,\mathcal
S_t$ uniquely up to linear equivalence.
\end{proof}


\section{A topological
classification of matrix pencils}
\label{jdt}

In this section we prove Theorem
\ref{yyw}. All matrices and vector
spaces are considered over  $\mathbb
F=\mathbb R$ or $\mathbb C$.

By Kronecker's theorem, each matrix
pair is equivalent (and hence
topologically equivalent) to
\eqref{1.4a}; it remains to prove the
uniqueness of \eqref{1.4a}. More
precisely, let
\begin{align}\label{kux}
(A,B)&:=(I_{r},D)\oplus(M_1,N_1)\oplus
\dots\oplus
(M_{t},N_{t})
      \\ \label{lis}
(A',B')&:=(I_{r'},D')\oplus(M'_1,N'_1)\oplus
\dots\oplus (M'_{t'},N'_{t'})
\end{align}
be direct sums of the form
\eqref{1.4a}; i.e., $D$ and $D'$ are
nonsingular and all $(M_i,N_i),$
$(M'_j,N'_j)$ are pairs of the form
\eqref{krx}. Let $(A,B)$ and $(A',B')$
be topologically equivalent. We need to
prove that
\begin{equation}\label{kjt}
\parbox[c]{0.7\textwidth}{$D$ is topologically similar to $D'$,
$t=t'$, and there is a reindexing of
the $(M'_j,N'_j)$'s such that
$(M_i,N_i)=(M'_i,N'_i)$ for all $i$.
}
\end{equation}

Let $ {\mathcal
P_1}:\pair{U_1}{V_1}{\mathcal
A_1}{\mathcal B_1} $ and $ {\mathcal
P_1}':\pair{U'_1}{V_1'}{\mathcal A_1'}
{\mathcal B_1'}$ be pairs of linear
mappings given by $(A,B)$ and $(A',B')$
in some bases of inner product spaces
$U_1,V_1,U_1',V_1'$ (which are
Euclidean if $\F=\R$ or unitary if
$\F=\C$). The commutative diagram
\eqref{kjlj} takes the form
\begin{equation}\label{kjlj1}
\begin{split}
\xymatrix@R=20pt@C=40pt{
{\mathcal P_1:\hspace{-7mm}}&
{U_1} \ar@<0.3ex>[r]^{\mathcal A_1}
\ar@<-0.3ex>[r]_{\mathcal B_1}
\ar[d]_{\varphi_1}
&{V_1}\ar[d]^{\psi_1}
   \\
{\mathcal P'_1:\hspace{-7mm}}&
{U'_1} \ar@<0.3ex>[r]^{\mathcal A_1'}
\ar@<-0.3ex>[r]_{\mathcal B_1'}&{V_1'}
}
\end{split}
\end{equation}
in which $\varphi_1$ and $\psi_1$ are
homeomorphisms.

Let us apply Transformations 1--3 to
the diagram \eqref{kjlj1}.

\begin{Rs}\label{lqphd}
Replace the commutative diagram
\eqref{kjlj1} with
\begin{equation}\label{lj1}
\begin{split}
\xymatrix@R=20pt@C=40pt{
{\mathcal P_2:\hspace{-7mm}}&
{U_2=\mathcal A_1^{-1}(\im \mathcal B_1)}
\ar@<0.3ex>[r]^{\quad\ \mathcal A_2}
\ar@<-0.3ex>[r]_{\quad\ \mathcal B_2}
\ar@<3ex>[d]_{\varphi_2}
&{\im \mathcal B_1=V_2}
\ar@<-3ex>[d]^{\psi_2}
   \\
{\mathcal P'_2:\hspace{-7mm}}&
{U_2'=\mathcal A_1'^{-1}(\im
\mathcal B_1')}
\ar@<0.3ex>[r]^{\quad\ \mathcal A'_2}
\ar@<-0.3ex>[r]_{\quad\ \mathcal B'_2}&
{\im \mathcal B'_1=V_2'}
}
\end{split}
\end{equation}
in which $\varphi_2$ and $\psi_2$ are
the restrictions of $\varphi_1$ and
$\psi_1$.
\end{Rs}

\begin{lemma}\label{kus}
The homeomorphisms $\varphi_2$ and
$\psi_2$ in \eqref{lj1} are defined
correctly.
\end{lemma}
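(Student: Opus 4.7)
The plan is to unpack what ``defined correctly'' requires and then extract it from the commutativity of \eqref{kjlj1}. Concretely, to set $\varphi_2 := \varphi_1|_{U_2}$ and $\psi_2 := \psi_1|_{V_2}$ as homeomorphisms $U_2\to U_2'$ and $V_2\to V_2'$, I need to verify three things: (a) $\psi_1(V_2)=V_2'$, (b) $\varphi_1(U_2)=U_2'$, and (c) the restrictions are homeomorphisms onto these images.

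First I would handle (a). From the commutativity $\psi_1 \mathcal{B}_1 = \mathcal{B}_1'\varphi_1$ and the surjectivity of $\varphi_1:U_1\to U_1'$ (as a bijection), I get $\psi_1(\im\mathcal{B}_1)=\mathcal{B}_1'\varphi_1(U_1)=\mathcal{B}_1'(U_1')=\im\mathcal{B}_1'$, which is exactly $\psi_1(V_2)=V_2'$. For (b), I would use both commutativity relations together with the bijectivity of $\psi_1$: for $u\in U_1$,
\[
u\in U_2 \iff \mathcal{A}_1 u\in \im\mathcal{B}_1
\iff \psi_1\mathcal{A}_1 u\in \psi_1(\im\mathcal{B}_1)
\iff \mathcal{A}_1'\varphi_1(u)\in \im\mathcal{B}_1'
\iff \varphi_1(u)\in U_2',
\]
so $\varphi_1$ restricts to a bijection $U_2\to U_2'$.

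For (c), I would invoke the general point-set fact that the restriction of a homeomorphism $f:X\to Y$ to an arbitrary subset $S\subseteq X$, equipped with the subspace topology, is a homeomorphism onto $f(S)$: continuity of $f|_S$ and of $(f^{-1})|_{f(S)}$ is automatic from the subspace topology, and they are mutual inverses by (a)/(b). Applied to $\varphi_1,\psi_1$ and the subsets $U_2\subseteq U_1$ and $V_2\subseteq V_1$, this gives the required homeomorphisms.

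I do not expect any serious obstacle; the only conceptual ingredient beyond bookkeeping is recognizing that preimages and images of the distinguished subspaces match under the given topological equivalence, which is a direct consequence of commutativity together with bijectivity of $\varphi_1$ and $\psi_1$. No linearity, closedness, or finite-dimensionality of $U_2, V_2$ needs to be invoked here; the argument is purely set-theoretic/topological.
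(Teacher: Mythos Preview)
Your proposal is correct and follows essentially the same line as the paper's proof: both arguments use the commutativity relations $\psi_1\mathcal B_1=\mathcal B_1'\varphi_1$ and $\psi_1\mathcal A_1=\mathcal A_1'\varphi_1$ together with bijectivity of $\varphi_1,\psi_1$ to show that $\psi_1(V_2)=V_2'$ and $\varphi_1(U_2)=U_2'$. The only cosmetic difference is that the paper proves one inclusion and then invokes the symmetric argument with $\varphi_1^{-1},\psi_1^{-1}$ for the reverse inclusion, whereas you get the equalities in one shot via the bijectivity; you also make explicit the (implicit in the paper) point that a restriction of a homeomorphism to a subspace is a homeomorphism onto its image.
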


\begin{proof}
We need to show that $\varphi_1(U_2)=
U'_2$ and $\psi_1(V_2)= V'_2$. It
suffices prove
\begin{equation}\label{jkt}
\varphi_1(U_2)\subset U'_2,\qquad
\psi_1(V_2)\subset  V'_2
\end{equation}
since then we can take
\[
\xymatrix@R=20pt@C=40pt{
{U'_1}
\ar@<0.3ex>[r]^{\mathcal A'_1}
\ar@<-0.3ex>[r]_{\mathcal B'_1}
\ar[d]_{\varphi_1^{-1}}
&{V'_1}
\ar[d]^{\psi_1^{-1}}
   \\
{U_1}
\ar@<0.3ex>[r]^{\mathcal A_1}
\ar@<-0.3ex>[r]_{\mathcal B_1}
&
{V_1}
}
\]
instead of \eqref{kjlj1} and obtain
$\varphi_1^{-1}(U'_2)\subset U_2$ and
$\psi_1^{-1}(V'_2)\subset V_2$ instead
of \eqref{jkt}, which implies
$U'_2\subset \varphi_1(U_2)$ and
$V'_2\subset \psi_1 (V_2)$.

Let us prove the second inclusion in
\eqref{jkt}. Take $v\in V_2=\im
\mathcal B_1$. There exists $u\in U_1$
such that $\mathcal B_1(u)=v$. Since
the diagram
\[\xymatrix@R=20pt@C=40pt{
{u}\ar[r]^{\mathcal B_1}
\ar[d]_{\varphi_1}
&v
\ar[d]^{\psi_1}
   \\
{u'}\ar[r]^{\mathcal B'_1} &v' }
\]
is commutative, $\psi_1(v)=v'=\mathcal
B_1'(u')\in \im\mathcal B_1'$.

Let us prove the first inclusion in
\eqref{jkt}: $\varphi_1(\mathcal
A_1^{-1}(V_2))\subset \mathcal
A_1'^{-1}(V'_2)$. Take $u\in \mathcal
A_1^{-1}(V_2)$, then $v:=\mathcal
A_1(u)\in V_2$. By the second inclusion
in \eqref{jkt}, $v':=\psi_1(v)\in
V'_2$. Since the diagram
\[
\xymatrix@R=20pt@C=40pt{
{u}\ar[r]^{\mathcal A_1}
\ar[d]_{\varphi_1}
&v
\ar[d]^{\psi_1}
   \\
{\varphi_1(u)}\ar[r]^{\ \ \mathcal A_1'} &v' }
\]
is commutative, $\varphi_1(u)\in
\mathcal A_1'^{-1}(V_2')$, which
completes the proof of correctness of
the mappings in \eqref{lj1}.
\end{proof}

Thus, the spaces $U_1,$ $V_1,$ $U_2,$
$V_2$ are homeomorphic to $U_1',$
$V_1',$ $U_2',$ $V_2'$ and so their
dimensions are equal. By Lemma
\ref{knp}, all regularizing
decompositions of $(A,B)$ and $(A',B')$
have the same number of summands
$(L_1^T,R_1^T)$ and the same number of
summands $(I_1,J_1(0))$.

We repeat Transformation \ref{lqphd}
until we obtain a diagram
\begin{equation}\label{ktu}
\begin{split}
\xymatrix@R=20pt@C=40pt{
{\mathcal P_{\ell}:\hspace{-7mm}}&
{U_{\ell}}
\ar@<0.3ex>[r]^{\mathcal A_{\ell}}
\ar@<-0.3ex>[r]_{\mathcal B_{\ell}}
\ar[d]_{\varphi_{\ell}}
&{V_{\ell}}
\ar[d]^{\psi_{\ell}}
   \\
{\mathcal P'_{\ell}:\hspace{-7mm}}&
{U_{\ell}'}
\ar@<0.3ex>[r]^{\mathcal A'_{\ell}}
\ar@<-0.3ex>[r]_{\mathcal B'_{\ell}}
&
{V_{\ell}'}
}
\end{split}
\end{equation}
in which ${\mathcal B}_{\ell}$ is a
surjection (then ${\mathcal B}'_{\ell}$
is a surjection too). An
$(\ell-1)$-fold application of Lemma
\ref{knp} ensures that
\begin{equation}\label{hri}
\parbox[c]{0.7\textwidth}{all
regularizing decompositions of
$(A,B)$ and $(A',B')$ have the same
number of summands $(L_k^T,R_k^T)$ and
the same number of summands
$(I_k,J_k(0))$ for each $k=1,2,\dots$
}
\end{equation}

Let \eqref{kjlj1} be the diagram
\eqref{ktu}  obtained. We apply to it
the following transformation (in fact,
we apply Transformation \ref{lqphd} to
the diagram obtained from \eqref{kjlj1}
by interchanging $\mathcal A_1$ and
$\mathcal B_1$ in $\mathcal P_1$ and
${\mathcal A_1}'$ and ${\mathcal B_1}'$
in ${\mathcal P_1}'$).


\begin{Rs}\label{lle}
If ${\mathcal B_1}$ and ${\mathcal
B_1}'$ in \eqref{kjlj1} are
surjections, then replace the
commutative diagram \eqref{kjlj1} with
\begin{equation*}\label{lkt1}
\begin{split}
\xymatrix@R=20pt@C=40pt{
{\mathcal P_2:\hspace{-7mm}}&
{U_2=\mathcal B_1^{-1}(\im \mathcal A_1)}
\ar@<0.3ex>[r]^{\quad\ \mathcal A_2}
\ar@<-0.3ex>[r]_{\quad\ \mathcal B_2}
\ar@<3ex>[d]_{\varphi_2}
&{\im \mathcal A_1=V_2}
\ar@<-3ex>[d]^{\psi_2}
   \\
{\mathcal P'_2:\hspace{-7mm}}&{U_2'
=\mathcal B_1\,'^{-1}(\im \mathcal A_1')}
\ar@<0.3ex>[r]^{\quad\ \mathcal A'_2}
\ar@<-0.3ex>[r]_{\quad\ \mathcal B'_2}&
{\im \mathcal A_1'=V_2'}
}
\end{split}
\end{equation*}
in which $\varphi_2$ and $\psi_2$ are
the restrictions of $\varphi_1$ and
$\psi_1$.
\end{Rs}

We repeat Transformation \ref{lle}
until we obtain a diagram \eqref{ktu}
in which ${\mathcal A}_{\ell}$ is a
surjection (then ${\mathcal A}'_{\ell}$
is a surjection too). By \eqref{hri} in
which the matrices of the pairs are
interchanged, all regularizing
decompositions of $(A,B)$ and $(A',B')$
have the same number of summands
$(J_k(0),I_k)$ for each $k=1,2,\dots$

Let \eqref{kjlj1} be the diagram
obtained. We apply to it the following
transformation.

\begin{Rs}\label{lke1}
If ${\mathcal A_1}$, ${\mathcal A_1}'$,
${\mathcal B_1}$, ${\mathcal B_1}'$ in
\eqref{kjlj1} are surjections, then
replace the commutative diagram
\eqref{kjlj1} with
\begin{equation}\label{lyx}
\begin{split}
\xymatrix@R=20pt@C=40pt{
{\mathcal P_2:\hspace{-7mm}}&
{U_2:=U_1/\Ker\mathcal B_1}
\ar@<0.3ex>[r]^{\mathcal A_2\ \ }
\ar@<-0.3ex>[r]_{\mathcal B_2\ \ }
\ar@<3ex>[d]_{\varphi_2}
&{V_1/\mathcal A_1(\Ker\mathcal B_1)=:V_2}
\ar@<-3ex>[d]^{\psi_2}
   \\
{\mathcal P'_2:\hspace{-7mm}}&
{U_2':=U_1'/\Ker\mathcal B_1'}
\ar@<0.3ex>[r]^{\mathcal A'_2\ \ }
\ar@<-0.3ex>[r]_{\mathcal B'_2\ \ }&
{V_1'/\mathcal A_1'(\Ker\mathcal B_1')=:V_2'}
}
\end{split}
\end{equation}
in which $\varphi_2$ and $\psi_2$ are
induced by $\varphi_1$ and $\psi_1$.
\end{Rs}

\begin{lemma}\label{kld}
The homeomorphisms $\varphi_2$ and
$\psi_2$ in \eqref{lyx} are defined
correctly.
\end{lemma}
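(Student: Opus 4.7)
The plan is to verify that $\varphi_1$ and $\psi_1$ respect the equivalence relations used to form the quotient spaces in \eqref{lyx}, and then invoke the universal property of the quotient topology to upgrade the resulting bijections to homeomorphisms. The argument splits naturally into the straightforward check for $\varphi_2$ on $U_1/\Ker\mathcal B_1$ and a slightly more delicate check for $\psi_2$ on $V_1/\mathcal A_1(\Ker\mathcal B_1)$.

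For $\varphi_2$, if $u_1-u_2\in\Ker\mathcal B_1$, then by the commutativity $\mathcal B_1'\varphi_1=\psi_1\mathcal B_1$ of \eqref{kjlj1},
\[
\mathcal B_1'(\varphi_1(u_1))=\psi_1(\mathcal B_1(u_1))=\psi_1(\mathcal B_1(u_2))=\mathcal B_1'(\varphi_1(u_2)),
\]
so $\varphi_1(u_1)-\varphi_1(u_2)\in\Ker\mathcal B_1'$. Hence $\varphi_1$ carries each coset of $\Ker\mathcal B_1$ into some coset of $\Ker\mathcal B_1'$; applying the same reasoning to the inverted diagram (as in the proof of Lemma \ref{kus}) gives the reverse inclusion, so the induced $\varphi_2$ is a bijection.

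For $\psi_2$, suppose $v_1-v_2\in\mathcal A_1(\Ker\mathcal B_1)$, say $v_1-v_2=\mathcal A_1(k)$ with $k\in\Ker\mathcal B_1$. The crucial use of surjectivity of $\mathcal A_1$ (guaranteed by the hypothesis of Transformation~\ref{lke1}) lets me choose $u_2\in\mathcal A_1^{-1}(v_2)$ and set $u_1:=u_2+k$; then $\mathcal A_1(u_i)=v_i$ and $u_1-u_2=k\in\Ker\mathcal B_1$. Commutativity of \eqref{kjlj1} yields $\psi_1(v_i)=\mathcal A_1'(\varphi_1(u_i))$, whence
\[
\psi_1(v_1)-\psi_1(v_2)=\mathcal A_1'\bigl(\varphi_1(u_1)-\varphi_1(u_2)\bigr),
\]
and the previous step places $\varphi_1(u_1)-\varphi_1(u_2)$ in $\Ker\mathcal B_1'$. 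Therefore $\psi_1(v_1)-\psi_1(v_2)\in\mathcal A_1'(\Ker\mathcal B_1')$, and the symmetric argument using surjectivity of $\mathcal A_1'$ produces the reverse inclusion, so $\psi_2$ is a well-defined bijection.

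Finally, continuity of $\varphi_2$ follows because the composition $U_1\xrightarrow{\varphi_1}U_1'\twoheadrightarrow U_2'$ is continuous and factors through the quotient map $U_1\twoheadrightarrow U_2$, so the universal property of the quotient topology applies; the same argument works for $\varphi_2^{-1}$, $\psi_2$, and $\psi_2^{-1}$. The main obstacle is the $\psi_2$ step, because the subspace $\mathcal A_1(\Ker\mathcal B_1)$ mixes information from the $U$-side and the $V$-side while $\psi_1$ is only a homeomorphism rather than a linear map; the bridge is exactly the surjectivity of $\mathcal A_1$ built into Transformation~\ref{lke1}, which allows lifting a coset representative through $\mathcal A_1$ and transferring the problem back to the already-settled $\varphi_2$ case.
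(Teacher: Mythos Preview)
Your proof is correct and follows essentially the same route as the paper's: both verify that $\varphi_1$ and $\psi_1$ carry cosets to cosets (the paper phrases this as the inclusions $\varphi_1(u+\Ker\mathcal B_1)\subset\varphi_1(u)+\Ker\mathcal B_1'$ and $\psi_1(v+\mathcal A_1(\Ker\mathcal B_1))\subset\psi_1(v)+\mathcal A_1'(\Ker\mathcal B_1')$, using surjectivity of $\mathcal A_1$ for the second), and both obtain bijectivity by inverting the diagram. Your version is slightly more complete in that you spell out the continuity of $\varphi_2$, $\psi_2$ and their inverses via the universal property of the quotient topology, which the paper leaves implicit.
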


\begin{proof}
We need to show that
\begin{align*}
\varphi_1(u+\Ker \mathcal B_1)=&
\varphi_1(u)+\Ker \mathcal B_1'\\
\psi_1(v+\mathcal A_1(\Ker \mathcal B_1))=&
\psi_1(v)+\mathcal A_1'(\Ker \mathcal B_1')
\end{align*}
for each $u\in U_1$ and $v\in V_1$.
Since $\mathcal A_1$ and $\mathcal B_1$
are surjections, it suffices to prove
\begin{align}\label{kue}
\varphi_1(u+\Ker \mathcal B_1)&\subset
\varphi_1(u)+\Ker \mathcal B_1'\\
\label{kue1}
\psi_1(v+\mathcal A_1(\Ker
\mathcal B_1))&\subset
\psi_1(v)+\mathcal A_1'
(\Ker \mathcal B_1').
\end{align}

Let us prove \eqref{kue}. Take $u\in
U_1$. Write $v:=\mathcal B_1(u)$ and
$v':=\psi_1(v)$. For each $k\in\Ker
\mathcal B_1$, we have
\[
\xymatrix@R=20pt@C=40pt{
{u+k}\ar[r]^{\quad\mathcal B_1}
\ar[d]_{\varphi_1}
&v
\ar[d]^{\psi_1}
   \\
{\varphi_1(u+k)}\ar[r]^{\qquad
\mathcal B_1'}
 &v' }
\]
hence $\varphi_1(u+k)\in\mathcal
B_1'^{-1}(v')=\varphi_1(u)+\Ker
\mathcal B_1'$.

Let us prove \eqref{kue1}. Take any
$v\in V_1$ and $k\in\Ker \mathcal B_1$.
Since $\mathcal A_1$ is surjective,
$v=\mathcal A_1(u)$ for some $u\in
U_1$, which gives
\[
\xymatrix@R=20pt@C=40pt{
{u}\ar[r]^{\mathcal A_1}
\ar[d]_{\varphi_1}
&v
\ar[d]^{\psi_1}
   \\
{\varphi_1(u)}\ar[r]^{\mathcal A_1'\quad}
&{\mathcal A_1'(\varphi_1(u))=\psi_1 (v)
\hspace{-1.5cm}}}
\]
By \eqref{kue}, there exists $k'\in\Ker
\mathcal B_1'$ such that
$\varphi_1(u+k)=\varphi_1(u)+k'$. Then
\[
\xymatrix@R=20pt@C=40pt{
{u+k}\ar[r]^{\mathcal A_1}
\ar[d]_{\varphi_1}
&{v+\mathcal A_1(k)}
\ar[d]^{\psi_1}
   \\
{\varphi_1(u)+k'}\ar[r]^{\mathcal A_1'\ \ }
&{\psi_1(v)+\mathcal A_1'(k')}
}
\]
which proves \eqref{kue1} and completes
the proof of correctness of the
mappings in \eqref{lyx}.
\end{proof}

Thus, the spaces $V_1$, $V_2$, and,
analogously, $V_3$ are homeomorphic to
$V_1'$, $V_2'$, and $V_3'$ and so their
dimensions are equal. By Lemma
\ref{kjr}, all regularizing
decompositions of $(A,B)$ and $(A',B')$
have the same number of summands
$(L_1,R_1)$.

We repeat Transformation \ref{lke1}
until we obtain a diagram \eqref{ktu}
in which ${\mathcal B}_{\ell}$ is a
bijection; then ${\mathcal A}_{\ell}$,
${\mathcal B}_{\ell}$, and ${\mathcal
B}'_{\ell}$ are bijections too. Thus,
$\mathcal P_{\ell}$ and $\mathcal
P'_{\ell}$ are topologically
equivalent; they are regular parts of
$\mathcal P_1$ and $\mathcal P_1'$. An
$(\ell-1)$-fold application of Lemma
\ref{kjr} ensures that all regularizing
decompositions of $(A,B)$ and $(A',B')$
have the same number of summands
$(L_k,R_k)$ for each $k=1,2,\dots$

\begin{proof}[Proof of Theorem
\ref{yyw}] We take arbitrary
regularizing decompositions \eqref{kux}
and \eqref{lis} that are topologically
equivalent, construct the commutative
diagram \eqref{kjlj1}, apply
Transformations \ref{lqphd}--\ref{lke1}
 to it, and obtain that
\begin{itemize}
  \item $t=t'$,
  \item there is a reindexing of
      the $(M'_j,N'_j)$'s such that
      $(M_i,N_i)=(M'_i,N'_i)$ for
      all $i$, and
  \item $(I_r,D)$ is topologically
      equivalent to $(I_{r'},D')$.
\end{itemize}
Thus, there are homeomorphisms
${\varphi}$ and $\psi$ such that the
diagram
\[
\xymatrix@R=20pt@C=40pt{
{\mathbb F^r} \ar@<0.3ex>[r]^{I_r}
\ar@<-0.3ex>[r]_D
\ar[d]_{\varphi}
&{\mathbb F^r}\ar[d]^{\psi}
   \\
{\mathbb F^{r'}}
\ar@<0.3ex>[r]^{I_{r'}}
\ar@<-0.3ex>[r]_{D'}&{\mathbb F^{r'}}
}\] is commutative. Hence, $r=r'$,
${\varphi}=\psi$, and so $D$ is
topologically similar to $D'$, which
ensures \eqref{kjt} and completes the
proof of Theorem \ref{yyw}.
\end{proof}

\section*{Acknowledgement}

V. Futorny  is supported in part by the
CNPq (grant 301320/2013-6) and FAPESP
(grant 2010/50347-9). This work was
done during a visit of V.V. Sergeichuk
to the University of S\~ao Paulo. He is
grateful to the University of S\~ao
Paulo for hospitality and FAPESP for
financial support (grant 2012/18139-2).


\begin{thebibliography}{99}

\bibitem{Blanc} J. Blanc, Conjugacy
    classes of affine automorphisms of
    $\mathbb K^n$ and linear automorphisms of
    $\mathbb P^n$ in the Cremona groups,
    Manuscripta Math. 119 (2)
    (2006) 225--241.

\bibitem{Bred} G.E. Bredon,
    Topology and Geometry,
    Springer-Verlag, New
    York, 1997.

\bibitem{bud} T.V. Budnitska,
    Classification of topological
    conjugate affine mappings,
    Ukrainian Math. J. 61 (2009)
    164--170.

\bibitem{bud1} T. Budnitska,
    Topological classification of
    affine operators on unitary and
    Euclidean spaces, Linear Algebra
    Appl. 434 (2011) 582--592.


\bibitem{Capp-conexamp} S.E. Cappell,
    J.L. Shaneson, Linear algebra and
    topology, Bull. Amer. Math. Soc.,
    New Series 1 (1979) 685--687.

\bibitem{Capp-2th-nas-n<=6} S.E.
    Cappell,  J.L. Shaneson, Nonlinear
    similarity of matrices, Bull. Amer.
    Math. Soc., New Series 1 (1979)
    899--902.

\bibitem{Capp-big-n<6} S.E. Cappell,
    J.L. Shaneson, Non-linear
    similarity, Ann. of Math. 113
    (2) (1981) 315--355.

\bibitem{Cap+sha} S.E. Cappell, J.L.
    Shaneson, Non-linear
    similarity and linear similarity
    are equivariant below dimension 6,
    Contemp. Math. 231 (1999) 59--66.


\bibitem{Cap+ste}
 S.E. Cappell, J.L. Shaneson, M.
Steinberger, J.E. West, Nonlinear
similarity begins in dimension six,
Amer. J. Math. 111 (1989) 717--752.


\bibitem{Ephr} W. Ephr\"amowitsch,
    Topologische Klassifikation affiner
    Abbildungen der Ebene, Mat. Sb. 42
    (1) (1935) 23--36.

\bibitem{gan} F.R. Gantmacher, The
    Theory of Matrices, vol. 2, AMS
    Chelsea, 2000.

\bibitem{h-p} I. Hambleton,
    E.K. Pedersen, Topological
    equivalence of linear
    representations of cyclic groups.
    I, Ann. of Math. 161 (2005) 61--104.

\bibitem{h-p1} I. Hambleton,
    E.K. Pedersen, Topological
    equivalence of
    linear representations for cyclic
    groups. II, Forum Math. 17 (2005)
    959--1010.


\bibitem{HJ12} R.A. Horn, C.R. Johnson,
    Matrix Analysis, 2nd ed., Cambridge
    University Press, New York, 2013.

\bibitem{h-s_lin}  R.A.
    Horn, V.V. Sergeichuk, A regularization
    algorithm for matrices of bilinear
    and sesquilinear forms,
    Linear Algebra Appl. 412
    (2006) 380--395.

\bibitem{ser_enc}  R.A.
    Horn, V.V. Sergeichuk,
Representations of quivers and mixed
graphs, Chapter 34 in: L. Hogben (Ed.),
Handbook of Linear Algebra, 2nd ed.,
Chapman \& Hall/CRC, Boca Raton, FL,
2014.

\bibitem{Pardon} W.C. Hsiang, W.
    Pardon, When are topologically
    equivalent orthogonal
    transformations linearly
    equivalent? Invent. Math. 68
    (2) (1982) 275--316.

\bibitem{Kuip-Robb} N.H. Kuiper,  J.W.
    Robbin, Topological classification
    of linear endomorphisms, Invent.
    Math.~19 (2) (1973) 83--106.

\bibitem{lop} S. L\'{o}pez de  Medrano,
    Topological aspects of matrix
    problems, in: Representations
    of algebras (Puebla, 1980), pp.
    196--210, Lecture Notes in Math.,
    903, Springer, Berlin, New York, 1981.

\bibitem{McCl} J. McCleary,  A
    First Course in Topology:
    Continuity and Dimension,
    American Mathematical Society,
    Providence, RI, 2006.

\bibitem{Robb} J.W. Robbin, Topological
    conjugacy and structural stability
    for discrete dynamical systems,
    Bull. Amer. Math. Soc.~78 (1972)
    923--952.

\bibitem{ryb_new} T. Rybalkina,
    Topological classification of
pairs of counter linear maps, Mat.
Stud. 39 (1) (2013) 21--28 (in
Ukrainian).

\bibitem{ryb+ser} T. Rybalkina, V.V.
    Sergeichuk, Topological
    classification of chains of linear
    mappings, Linear Algebra Appl. 437
    (2012) 860--869.

\bibitem{ryb+ser_meb} T. Rybalkina,
    V.V.
    Sergeichuk, Topological
    classification of M\"obius
    transformations, J. Math. Sci. (N.Y.)
    193 (2013) 769--774.

\bibitem{ryb+ser1} T. Rybalkina, V.V.
    Sergeichuk, Topological classification
    of oriented cycles of linear
    mappings, Ukrainian Math. J., in press,
    available at  arXiv:1401.2550.

\bibitem{ser_cyc} V.V. Sergeichuk,
    Computation of canonical matrices
    for chains and cycles of linear
    mappings, Linear Algebra Appl. 376
    (2004) 235--263.

\bibitem{doo} P. Van Dooren, The
    computation of Kronecker's
    canonical form of a singular
    pencil, {Linear Algebra Appl.}
    27 (1979) 103--140.

\end{thebibliography}
\end{document}